\newcommand{\Dzamonja}{D\v{z}amonja\xspace}
\newcommand{\NUF}{\textrm{NUF}}
\DeclareMathOperator{\SDom}{SDom}
\newtheorem{lemma}{Lemma}
\newtheorem*{mclaim}{Main Claim}
\title{Small $\mathfrak{u}_\kappa$ and large $2^\ka$ for supercompact $\kappa$}
\author{Andrew D. Brooke-Taylor}
\address{Group of Logic, Statistics and Informatics,\\
Graduate School of System Informatics,\\
Kobe University\\
Rokko-dai 1-1,\\
Nada, Kobe, 657-8501\\
Japan}
\email{andrewbt@kurt.scitec.kobe-u.ac.jp}
\thanks{Written while holding a
JSPS Postdoctoral Fellowship for Foreign Researchers at Kobe University and
supported by JSPS Grant-in-Aid no. 23 01765.}
\begin{document}

\begin{abstract}
Garti and Shelah~\cite{GaS:PCCI} state that one can force $\fru_\ka$
to be $\ka^+$ for supercompact $\ka$ with $2^\ka$ arbitrarily large,
using the technique of \Dzamonja and Shelah~\cite{DzS:UGSSC}.  
Here we spell out how this can be done.
\end{abstract}

\maketitle

\section{Introduction}

For any regular cardinal $\la$, we let
\[
\fru_\la=\min\{|\calB|: \calB
\text{ is a filter base for a uniform ultrafilter on }\la\}
\]
(recall that an ultrafilter is \emph{uniform} if every set in it has
the same cardinality).
A simple diagonalisation argument shows that $\fru_\la$ must be at least 
$\la^+$.
In \cite[Claim~2.2 (b)]{GaS:PCCI}, Garti and Shelah state that for
$\ka$ a supercompact cardinal, one can
force $\fru_\ka=\ka^+$ with $2^\ka$ arbitrarily large.  
They provide a short proof sketch, appealing to the arguments of 
\cite{DzS:UGSSC}.  We give here a detailed proof, based on the pair of
talks the author gave in the Kobe University set theory seminar on the topic,
closely following \cite{DzS:UGSSC}.
It should be noted that we have not discussed this with Garti or Shelah,
so what is presented might not exactly match their original intention,
but it seems (to the author) to be the most natural way to proceed.

We base our notation on that of
\Dzamonja and Shelah~\cite{DzS:UGSSC},
but do change much of it.
A particularly important change to note is that 
we use $p\leq q$ to mean that $p$ is a stronger condition than $q$,
in contrast with the usage in \cite{DzS:UGSSC}. 

The intention is that this note should be readable with no prior knowledge of
\cite{DzS:UGSSC} or \cite{GaS:PCCI}.  

\section{The partial order}

Let $\ka$ be a supercompact cardinal, and 
take $\Upsilon\geq 2^\ka$ such that $\Upsilon^{\ka}=\Upsilon$.
We will exhibit a forcing that makes $\fru_\ka=\ka^+$
and $2^{\ka}=\Upsilon$.  
To this end, we shall actually describe a forcing iteration of length
$\Upsilon^+$, which can be truncated at an appropriate point to obtain the
desired forcing (Garti and Shelah~\cite{GaS:PCCI} mention an 
iteration of length $\ka^+$; with \Dzamonja and Shelah's loose use of the
word ``iteration'' in \cite{DzS:UGSSC}, this matches the cofinality 
$\ka^+$ iteration we present).

We use the natural generalisation of Mathias forcing at measurable
$\ka$ rather than $\omega$ 
(or alternatively put, the natural generalisation of Prikry forcing 
to obtain $\ka$ sequences rather than those of length $\omega$;
we shall only ever be concerned with ultrafilters).
That is, for $\calD$ an ultrafilter on $\ka$, 
conditions in $\bbM^\ka_\calD$ are pairs
$(s,X)$ such that $s\in[\ka]^{<\ka}$ and $X\in \calD$, 
and $(t,Y)\leq(s,X)$ if and
only if $t$ end-extends $s$ and $(t\smallsetminus s)\cup Y\subseteq X$.

We define an iteration
$\langle P_i, \dot Q_i:i<\Upsilon^+\rangle$ as follows.
Let $G_i$ be $P_i$-generic; we describe $Q_i$ in $V[G_i]$.
Let NUF denote the set of normal ultrafilters on $\ka$ 
(in the measurable sense --- we only need supercompactness of $\ka$ to 
give us a Laver diamond --- see below).
The partial order $Q_i$ is then the sum over $\calD\in\NUF$ 
(interpreted in $V[G_i]$)
of the partial orders $\bbM^{\ka}_\calD$.  
That is, we take a maximum element $\bbone_{Q_i}$ (\Dzamonja and Shelah use 
$\emptyset$), and set
\[
Q_i=\{\bbone_{Q_i}\}\cup
\bigcup\left\{\{\calD\}\times\bbM^\ka_\calD:\calD\in\NUF\right\},
\]
with $p\leq q$ if and only if either
\begin{enumerate}
\item $q=\bbone_{Q_i}$, or
\item there are $\calD\in\NUF$ and $p_1\leq q_1\in\bbM^\ka_\calD$ such that
$p=(\calD,p_1)$ and $q=(\calD,q_1)$.
\end{enumerate}
We shall write $\bbone_\calD$ for
$(\calD,(\emptyset,\ka))$, the maximum element of the $\calD$ part of $Q_i$.

Now to the support of elements of $P_i$, $i\leq\Upsilon^+$.
We define the \emph{essential support of $p$}, $\SDom(p)$, by
\begin{multline*}
\SDom(p)=\big\{j\in\dom(p):\\
\lnot\left(p\restr j\forces_{P_j}p(j)\in\{\bbone_{Q_j}\}\cup
\{\bbone_\calD:\calD\in\NUF\}\right)\big\}.
\end{multline*}
Thus, $\SDom(p)$ is the set of coordinates at which $p$ does something
more than just choosing the ultrafilter for forcing at that stage.
We require that conditions in $P_{\Upsilon^+}$ 
have support bounded below $\Upsilon^+$ and
essential support of cardinality strictly less than $\ka$.
We freely identify $P_{\Upsilon^+}$ with $\bigcup_{i<\Upsilon^+}P_i$.

We call a condition $p\in P_i$ \emph{purely full in $P_i$}
or \emph{purely full in its domain} if for all $j<i$ we have
\[
p\restr j\forces_{P_j}p(j)\in\{\bbone_\calD:\calD\in\NUF\}.
\]
For $p\in P_i$ we write $P_i\downarrow p$ for 
$\{q\in P_i:q\leq p\}$; we will particularly be interested in the case
when $p$ is purely full in $P_i$.

\begin{lemma}[Claim 1.13 of \cite{DzS:UGSSC}]\label{kadirdcl}
$P_{\Upsilon^+}$ is $\ka$-directed-closed.
\end{lemma}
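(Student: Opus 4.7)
The plan is to build a common lower bound $p$ for a given directed set $\{p_\alpha : \alpha < \mu\} \subseteq P_{\Upsilon^+}$ with $\mu < \ka$ by ordinal recursion on coordinates $j \in \dom(p)$.

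As a preliminary, I would verify that each $\bbM^\ka_\calD$ is $\ka$-directed-closed. Given a directed $\{(s_\alpha, X_\alpha) : \alpha < \mu\}$, any two members have a common lower bound whose first coordinate end-extends both $s_\alpha$ and $s_\beta$, so $\{s_\alpha : \alpha < \mu\}$ is linearly ordered by end-extension. Hence $s := \bigcup_\alpha s_\alpha$ lies in $[\ka]^{<\ka}$ by regularity of $\ka$, while $X := \bigcap_\alpha X_\alpha$ lies in $\calD$ by $\ka$-completeness, and one checks that $(s, X)$ is a common lower bound. Because distinct $\calD$-components of $Q_i$ are pairwise incompatible, any directed subset of $Q_i$ has its non-maximal elements confined to a single $\{\calD\} \times \bbM^\ka_\calD$, so $Q_i$ inherits $\ka$-directed-closure.

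First I would set $\dom(p) = \bigcup_\alpha \dom(p_\alpha)$, which is bounded below $\Upsilon^+$ since each summand is and $\operatorname{cf}(\Upsilon^+) > \mu$. Then I would define $p(j)$ recursively on $j \in \dom(p)$: assuming $p \restr j$ has been constructed as a common lower bound of $\{p_\alpha \restr j : \alpha < \mu\}$ in $P_j$, one verifies that $p \restr j$ forces $\{p_\alpha(j) : \alpha < \mu,\ j \in \dom(p_\alpha)\}$ to be a directed subset of $Q_j$ by tracing common lower bounds of finite subsets of $\{p_\alpha\}$ in $P_{\Upsilon^+}$ and restricting them to coordinate $j$. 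Applying $\ka$-directed-closure of $Q_j$ below $p \restr j$ then yields a $P_j$-name $p(j)$ for a common lower bound.

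The main obstacle I expect is verifying that $|\SDom(p)| < \ka$. The key observation is that if $j \notin \bigcup_\alpha \SDom(p_\alpha)$, then each $p_\alpha(j)$ (for $j \in \dom(p_\alpha)$) is forced by $p_\alpha \restr j$ to equal either $\bbone_{Q_j}$ or $\bbone_\calD$ for some $\calD \in \NUF$, and directedness of $\{p_\alpha\}$ forces any two such non-trivial choices at $j$ to involve the same $\calD$ (otherwise no common lower bound could exist in $Q_j$). One can therefore choose $p(j) := \bbone_\calD$ for that common $\calD$ (or $\bbone_{Q_j}$ if all choices at $j$ are trivial), ensuring $j \notin \SDom(p)$. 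Thus $\SDom(p) \subseteq \bigcup_\alpha \SDom(p_\alpha)$, which has cardinality less than $\ka$ by regularity, as required.
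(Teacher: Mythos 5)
Your argument is correct and is exactly the ``standard'' argument that the paper's one-line proof alludes to: $\ka$-directed-closure of each $\bbM^\ka_\calD$, hence of each sum $Q_i$, followed by the usual coordinatewise recursion through the $<\ka$-essential-support iteration, with the key point being the bound $\SDom(p)\subseteq\bigcup_\alpha\SDom(p_\alpha)$. The only refinement worth noting is at coordinates $j\notin\bigcup_\alpha\SDom(p_\alpha)$: since $\NUF$ is interpreted in $V^{P_j}$ and the common nontrivial value may depend on the generic, you should take $p(j)$ to be a $P_j$-name for that common value (and $\bbone_{Q_j}$ where all values are trivial) rather than a single fixed $\bbone_\calD$ chosen in advance; such a name is still forced by $p\restr j$ to lie in $\{\bbone_{Q_j}\}\cup\{\bbone_\calD:\calD\in\NUF\}$, so $j\notin\SDom(p)$ as you intend.
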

\begin{proof}
Each $\bbM^\ka_\calD$ is $\ka$-directed-closed, so this is standard.
\end{proof}

\begin{lemma}[Claim 1.16 of \cite{DzS:UGSSC}]\label{kacc}
Let $\tau$ be a $P_{\Upsilon^+}$-name and suppose that $p$ purely full in $P_i$
forces that $\tau$ names a set in the ground model $V$.
Then there is a $q\leq p$ purely full in its domain and a 
$(P_{\dom(q)}\downarrow q)$-name $\sigma$ such that
$q\forces\tau=\sigma$.
\end{lemma}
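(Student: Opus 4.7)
The plan is to build $q$ by a transfinite recursion of length at most $\ka$, extending $p$ at each step to absorb more of the information that $\tau$ depends on, and using the $\ka$-directed-closedness of Lemma~\ref{kadirdcl} to take common lower bounds at limit stages. I would set $q_0 := p$ and $i_0 := i$, and maintain as an invariant that each $q_\alpha$ is purely full in $P_{i_\alpha}$ for some non-decreasing sequence $\langle i_\alpha \rangle$ of ordinals below $\Upsilon^+$.

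At a successor stage $\alpha + 1$, I would look for witnesses to the failure of $q_\alpha$ to already give the conclusion of the lemma: namely, a pair of extensions $r_1, r_2 \leq q_\alpha$ in $P_{\Upsilon^+}$ that agree on $P_{i_\alpha} \downarrow q_\alpha$ but force $\tau$ to name distinct ground-model sets. (Equivalently: a condition $r \leq q_\alpha$ deciding $\tau = \check v$ whose decision is not already forced by $r \restr i_\alpha$.) Taking such a witness, I would let $i_{\alpha+1}$ be large enough to include the supports of $r_1, r_2$, and construct $q_{\alpha+1} \leq q_\alpha$ purely full in $P_{i_{\alpha+1}}$ whose ultrafilter choices at coordinates in $[i_\alpha, i_{\alpha+1})$ agree with $r_1$ (equivalently $r_2$), while absorbing the essential-support portions of $r_1, r_2$ into the $q_{\alpha+1}$-data below (using that the essential support lives in stages where things have already been decided, and so the conditions above $q_{\alpha+1}$ in those coordinates can be fitted in). At limits $\alpha < \ka$, I would take $i_\alpha := \sup_{\beta < \alpha} i_\beta$ and use $\ka$-directed-closedness to get $q_\alpha \leq q_\beta$ for all $\beta < \alpha$, purely full in $P_{i_\alpha}$.

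Once the recursion halts at some $q := q_\gamma$, I would define
\[
\sigma := \{ (\check v, r) : r \in P_{\dom(q)} \downarrow q \text{ and } r \forces_{P_{\Upsilon^+}} \tau = \check v \}
\]
and argue that $q \forces \tau = \sigma$: given any $r' \leq q$ in $P_{\Upsilon^+}$ with $r' \forces \tau = \check v$, the termination condition guarantees that $r' \restr \dom(q)$ already forces $\tau = \check v$, so the pair $(\check v, r' \restr \dom(q))$ appears in $\sigma$.

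The principal obstacle is the termination argument — showing that the recursion closes off at some $\gamma \leq \ka$ so that limits below $\ka$ suffice. The expected mechanism is that each successor step introduces only $<\ka$ new essential coordinates (by the $|\SDom(\cdot)| < \ka$ constraint on $r_1, r_2$), so over $\ka$ steps the cumulative essential support stays bounded and the sequence of $i_\alpha$'s is bounded below $\Upsilon^+$; then once these have all been absorbed, no further witness of the failure can exist, as any would-be witnessing pair below $q_\ka$ would already have had its discrepancy resolved at some earlier stage. This fusion-style closure, exploiting the interplay between the support constraints and $\ka$-directed-closedness, is the technical heart of the argument and mirrors the strategy used for the analogous claim in~\cite{DzS:UGSSC}.
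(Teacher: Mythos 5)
There is a genuine gap at exactly the point you identify as the technical heart: the termination of your recursion. The proposed mechanism --- each successor step adds fewer than $\ka$ new essential coordinates, so after $\ka$ steps everything has been ``absorbed'' and no witness to failure can remain --- does not work. Nothing in your bookkeeping prevents a brand-new witnessing pair below $q_\ka$ whose essential support and whose decision about $\tau$ are unrelated to the $\ka$ many discrepancies you handled earlier: the conditions below a purely full condition that decide $\tau$ can number $\Upsilon$, and treating $\ka$ of them one at a time yields no predensity statement at stage $\ka$. (Two smaller points: after $\ka$ steps the accumulated essential coordinates can have size exactly $\ka$, not $<\ka$; and $q$ must be purely full, so ``absorbing the essential-support portions of $r_1,r_2$ into the $q$-data'' is not available --- a purely full condition only records ultrafilter choices.) You also impose no relation between witnesses chosen at different stages, so no chain-condition argument can bound the length of your recursion either.

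The missing ingredient is precisely a chain condition; as the paper puts it, the lemma ``is essentially just the $\ka^+$ chain condition.'' The paper argues by contradiction: assuming the conclusion fails, at each stage $\zeta<\Upsilon^+$ one forms the name $\sigma_\zeta=\{\langle\check w,r_\xi\rangle:\xi<\zeta,\ w\in x_\xi\}$ from \emph{all} previously chosen deciding conditions $r_\xi$; the failure of $p_\zeta\forces\tau=\sigma_\zeta$ says exactly that $\{r_\xi:\xi<\zeta\}$ is not predense below $p_\zeta$, so one may choose $r_\zeta\leq p_\zeta$ deciding $\tau$ and \emph{incompatible with every earlier} $r_\xi$. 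Running this for $\Upsilon^+$ steps produces an antichain of size $\Upsilon^+$ inside $\bigcup_{\zeta}P_{i_\zeta}\downarrow p_\zeta$, which is $\ka^+$-cc by a $\Delta$-system argument on essential supports (below a purely full condition this suborder is essentially a $<\ka$-support iteration of the forcings $\bbM^\ka_{p^*(\al)}$) --- a contradiction. So the bound on the length of the construction comes from the $\ka^+$-cc applied to pairwise incompatible witnesses measured against the cumulative name, not from a fusion-style closure of length $\ka$; as written, your recursion capped at $\ka$ cannot be shown to close off.
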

\begin{proof}
This is essentially just the $\ka^+$ chain condition.
Suppose the Lemma fails for $p$ purely full in its domain and 
$\tau$ a $P_{\Upsilon^+}$-name.
By recursion on $\zeta<\Upsilon^+$ we define 
$i_\zeta\in\Upsilon^+$, 
$\sigma_\zeta\in V^{P_{i_{\zeta}}}$, $p_\zeta$ purely full
in $P_{i_\zeta}$, $r_\zeta\in P_{i_{\zeta+1}}\downarrow p_{\zeta+1}$, 
and $x_\zeta\in V$,
such that:
\begin{enumerate}
\item $\langle i_\zeta\st\zeta<\Upsilon^+\rangle$ 
is strictly increasing continuous, 
\item $\langle p_\zeta\st\zeta<\Upsilon^+\rangle$ is decreasing, with 
$\dom(p_\zeta)=i_\zeta$ and $p_0=p$,
\item\label{rdecides} 
$r_\zeta\forces\tau=\check{x}_\zeta$, and $r_\zeta\perp r_\xi$ for
all $\xi<\zeta$, 
\item\label{sigmadef} $\sigma_\zeta=
\{\langle\check{w},r_\xi\rangle: \xi<\zeta\text{ and }w\in x_\xi\}$.
\end{enumerate}
Suppose we have $p_\xi$, $i_\xi$ and $\sigma_\xi$ for all $\xi\leq\zeta$,
and $r_\xi$ and $x_\xi$ for $\xi<\zeta$, satisfying 1--4.
From our assumption that the Lemma fails we have that 
\mbox{$p_\zeta\nforces\tau=\sigma_\zeta$}.
Using (\ref{rdecides}) and (\ref{sigmadef}), 
this means that $\{r_\xi:\xi<\zeta\}$ is not predense below $p_\zeta$.
So there is some $r_\zeta\leq p_\zeta$ in $P_{\Upsilon^+}$ 
incompatible with each
$r_\xi$, $\xi<\zeta$.  By extending if necessary, we may arrange that there
is some specific $x_\zeta\in V$ such that $r_\zeta\forces\tau=\check{x}_\zeta$,
and that $\dom(r_\zeta)=\sup(\dom(r_\zeta))$.
We may then define $p_{\zeta+1}$, $i_{\zeta+1}$ and $\sigma_{\zeta+1}$ 
from $r_\zeta$.  
Since continuity determines the values of $i_\zeta$, $p_\zeta$ and
$\sigma_\zeta$ for $\zeta$ a limit ordinal, 
this completes the recursive definition.

But now $\{r_\zeta:\zeta<\Upsilon^+\}$ is an antichain lying in
$\bigcup_{\zeta<\Upsilon^+}P_{i_\zeta}\downarrow p_\zeta$.  
This suborder of 
$P_{\Upsilon^+}$ is essentially the same as the $<\ka$-support iteration with
$\al$-th iterand $\bbM^\ka_{p^*(\al)}$ for every $\al<\Upsilon^+$, 
where $p^*=\bigcup_{\zeta<\Upsilon^+}p_\zeta$.
\Dzamonja and Shelah formalise this, and observe that this latter iteration
is $\ka^{+}$-cc, but perhaps it is easiest here to simply observe that the same
proof (a $\Delta$-system argument on  essential supports) shows that 
$\bigcup_{\zeta<\Upsilon^+}P_{i_\zeta}\downarrow p_\zeta$ is also $\ka^{+}$-cc.
In any case, we have a contradiction.
\end{proof}

\begin{lemma}\label{sizeUpsilon}
For $\Upsilon\leq j<\Upsilon^+$
and $p$ purely full in $P_j$,
$P_j\downarrow p$ has 
a dense suborder of
cardinality $\Upsilon$.
\end{lemma}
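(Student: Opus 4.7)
The plan is a direct cardinal-arithmetic count of a dense subfamily of $P_j\downarrow p$, proceeding by induction on $j$ for $\Upsilon\leq j<\Upsilon^+$ (with the analogous statement at smaller ordinals serving as the inductive hypothesis at coordinates $\al<j$). Since $p$ is purely full with $\dom(p)=j$, any $q\leq p$ in $P_j$ must also have $\dom(q)=j$: at an $\al$ outside $\dom(q)$ we would have $q(\al)=\bbone_{Q_\al}$, which fails to lie below $p(\al)=\bbone_{\calD_\al}$ in the order on $Q_\al$. Writing $E=\SDom(q)$ (of size $<\ka$), outside $E$ the value $q(\al)$ is forced equal to $p(\al)$; at $\al\in E$, $q(\al)$ is a $P_\al$-name for a non-trivial $(\calD_\al,(s_\al,X_\al))$ whose ultrafilter slot is pinned by $p(\al)$, leaving as free data only $s_\al\in[\ka]^{<\ka}$ and $X_\al\in\calD_\al$.

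Next I would reduce to a tractable dense subfamily.  Using the $\ka$-directed-closedness of Lemma \ref{kadirdcl}, we may density-strengthen $q\restr\al$ to decide each $s_\al$ as a specific element of $[\ka]^{<\ka}$ in $V$.  For $X_\al$, the $\Delta$-system argument indicated at the end of the proof of Lemma \ref{kacc} gives that $P_\al\downarrow(p\restr\al)$ is $\ka^+$-cc, so $X_\al$ admits a nice name given by $\ka$ antichains of size $\leq\ka$ drawn from this suborder.  By the inductive hypothesis there is a dense subset of $P_\al\downarrow(p\restr\al)$ of cardinality at most $\Upsilon$, from which we may take the antichains, bounding the number of admissible nice names at each coordinate by $\Upsilon^{\ka}=\Upsilon$.

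Now assemble: there are $|[j]^{<\ka}|\leq|j|^{<\ka}\leq\Upsilon^{\ka}=\Upsilon$ choices for $E$, and for each $E$ the data $(s_\al,X_\al)_{\al\in E}$ runs through at most $(\ka\cdot\Upsilon)^{|E|}\leq\Upsilon^{\ka}=\Upsilon$ possibilities, for a grand total of $\Upsilon$.  The main obstacle is the mild circularity — bounding nice names at coordinate $\al$ uses a density bound on $P_\al\downarrow(p\restr\al)$, which is precisely the inductive claim — together with verifying that the same count also disposes of the case $j<\Upsilon$ as a base case; this is immediate, since each bound above remains valid (indeed is only smaller) when $j<\Upsilon$.
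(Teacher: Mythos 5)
Your argument is correct and is essentially the paper's own proof: an induction on $j$ in which the $\ka^+$-cc lets one count, via names given by $\ka$ many antichains drawn from the dense set supplied by the inductive hypothesis at each coordinate, the possible Mathias data below $p(\al)$, with the $<\ka$ essential support and $\Upsilon^{\ka}=\Upsilon$ absorbing the count at arbitrary (in particular limit) stages. The only cosmetic difference is that the paper also treats the stems $s_\al$ by counting their names through antichains rather than first deciding them in $V$ by closure, which sidesteps the small bookkeeping otherwise needed to check that stem-deciding conditions are dense.
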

\begin{proof} 
This is again a use of the $\ka^+$-cc, along with the fact that
$\Upsilon^\ka=\Upsilon$.
We argue by induction.
For any $i<j$, a condition in $Q_i$ below $p(i)$ 
consists of a sequence from
$\ka$ of length less than $\ka$ and a subset of $\ka$, 
all of which may be determined by
$\ka$ many antichains from $P_i$. 
At limit stages, the result follows from the
fact that we are using $<\ka$ (essential) support.
\end{proof}

\section{Isolating an appropriate suborder}

Having defined $P_{\Upsilon^+}$ and observed some basic properties,
we now move to the key task of isolating a suborder that will
be what we actually force with to get $\fru_\ka<2^\ka$.
This suborder will be of the form $P_{\al}\downarrow p$ for
some condition $p$ purely full in $P_\al$;
the task thus boils down to constructing an appropriate $p$.

Since $P_{\Upsilon^+}$ is $\ka$-directed-closed (Lemma~\ref{kadirdcl}), 
it is natural to first apply a Laver
preparation \cite{Lav:prep} to ensure that $\ka$ remains supercompact after
our forcing.  For our argument, we will actually use it to obtain much more.
So let $h:\ka\to V_\ka$ be a Laver diamond, and let 
$\langle S_\al,\dot R_\be:\al\leq\ka,\be<\ka\rangle$
denote the Laver preparation defined using $h$ \cite{Lav:prep}.
That is, $S_\ka$ is a reverse Easton iteration, and the sequence 
$\langle\dot R_\be:\be<\ka\rangle$ and an auxiliary sequence of ordinals
$\langle\la_\be:\be<\ka\rangle$ are defined recursively 
according the the Laver diamond:
if $\be>\la_\ga$ for all $\ga<\be$, and $h(\be)$ is an ordered pair with
first term a $P_\be$-name for a
$\be$-directed-closed partial order and second term an ordinal,
then we set $(\dot R_\be,\la_\be)=h(\be)$; otherwise, we take $R_\be$ to be
the trivial forcing and $\la_\be$ to be $0$.

Take $\la\geq$\mbox{$|S_\ka*\dot P_{\Upsilon^+}|$} 
(this is probably overkill, but it makes no difference), and 
let $j:V\to M$ with ${}^{\la}M\subseteq M$ be a $\la$-supercompactness
embedding with critical point $\ka$ sent to $j(\ka)>\la$, such that 
$j(h)(\ka)=(P_{\Upsilon^+},\la)$.  
In particular, applying $j$ to the Laver preparation
$S_\ka$ we get $j(S_\ka)=S_{j(\ka)}^M={S_\ka*\dot P_{\Upsilon^+}*\dot S^*}^M$
for the appropriate tail iteration $S^*$ (in $M$).  Let us denote
$j(P_{\Upsilon^+})$ by $P'_{j(\Upsilon^+)}$.  Thus, applying $j$ to 
$S_\ka*\dot P_{\Upsilon^+}$ yields 
\[
j(S_\ka*\dot P_{\Upsilon^+})={S_\ka*\dot P_{\Upsilon^+}*\dot S^**\dot P'_{j(\Upsilon^+)}}^M.
\]
In the definition of the Laver preparation, if we have
a non-trivial iterand $\dot R_\al$ coming from 
$h(\al)=(\dot R_\al,\la_\al)$, then the 
subsequent iterands used are trivial until at least stage $\la_\al+1$, and
thereafter must be at least $|\la_\al|$-directed-closed.  
Since direct limits are only taken at inaccessible stages,
it follows that the tail
of the iteration from stage $\al+1$ onward is at least 
$|\la_\al|$-directed closed.  
In particular, we have in $M$ that
$\dot S^*$ is forced to be at least $\la$-directed-closed.
By elementarity, we also have that $P'_{j(\Upsilon^+)}$ is
$j(\ka)>\la$-directed-closed.

\begin{mclaim}[1.18 of \cite{DzS:UGSSC}]
In $V^{S_\ka}$, 
there exist sequences
\begin{align*}
\bar\al&=\langle\al_i:i<\Upsilon^+\rangle,\\
\bar p^*&=\langle p^*_i:i<\Upsilon^+\rangle,\text{ and}\\
\bar q^*&=\langle q^*_i=({}^1q_i,{}^2q_i):i<\Upsilon^+\rangle, 
\end{align*}
such that the following hold.
\end{mclaim}
\renewcommand{\theenumi}{\alph{enumi}}
\renewcommand{\theenumii}{\arabic{enumii}}
\begin{enumerate}
\item \label{als}
$\bar\al$ is a strictly increasing continuous
sequence of ordinals less than $\Upsilon^+$.
\item
Each $p^*_i$ is purely full in $P_{\al_i+1}$.
\item $\bar p^*$ is a decreasing sequence of conditions in
$P_{\Upsilon^+}$.
\item 
$\bar q^*\in M^{S_\ka}$,
and in $M^{S_\ka}$ we have for each $i<\Upsilon^+$ 
that
\[
(p^*_i,{}^1q_i)\in P_{\Upsilon^+}*\dot S^*
\]
and
\[
(p^*_i,{}^1q_i,{}^2q_i)\in P_{\Upsilon^+}*\dot S^**\dot P'_{j(\al_i+1)}.
\]
\item \label{pqqdecr}
In $M^{S_\ka}$, 
$\langle(p^*_i,{}^1q_i,{}^2q_i):i<\Upsilon^+\rangle$ is a decreasing 
sequence of conditions in
$P_{\Upsilon^+}*\dot S^**\dot P'_{\sup_{i<\Upsilon^+}(j(\al_i+1))}$.
\item \label{mastercond}
In $M^{S_\ka}$, $(p^*_{i+1},{}^1q_{i+1})$ forces that ${}^2q_{i+1}$ is a
common extension of
\[
\{j(r):r\in G_{P_{\al_i+1}}\}
\]
\item \label{Bs}
If $\dot B$ is an $S_\ka$-name for a $P_{\al_i+1}$-name for a subset
of $\ka$ then there is an $S_\ka*\dot P_{\Upsilon^+}$-name $\tau_{\dot B}$ 
for an element of
$\{0,1\}$ such that:
\begin{enumerate}
\item in $V$, $(\bbone_{S_\ka},\dot p^*_{i+1})$ forces $\tau_{\dot B}$ to be a 
$P_{\al_{i+1}+1}\downarrow p^*_{i+1}$-name, and
\item \label{kainjB}
$M\sat[(\bbone_{S_\ka},\dot p^*_{i+1},q^*_{i+1})\forces
\check\ka\in j(\dot B) \iff \tau_{\dot B}=\check1].
$
\end{enumerate}
\stepcounter{enumi}
\item \label{cfikadef}
If $\cf(i)>\ka$, then in $V^{S_\ka*\dot P_{\al_i}}$ we have that\\
$p^*_i(\al_i)=\Big\{\dot B[G_{P_{\al_i}}]:$
\parbox[c]{200pt}{
$\dot{B}$ is a $P_{\al_i}\downarrow(p^*_i\restr\al_i)$-name for a 
subset of $\ka$ and $\tau_{\dot B}[G_{P_{\al_i}}]=1$
}$\Big\}.$
In particular, this is a normal ultrafilter on $\ka$.
\end{enumerate}
(We have omitted (h) from our labelling so that it corresponds to that in 
\cite{DzS:UGSSC}.)

The crucial idea here is buried in item (g.2) 
We have an elementary embedding with critical point $\kappa$, 
and we want a nice normal ultrafilter on $\ka$, so as ever we define
it by saying that $B\subseteq\ka$ is in the ultrafilter 
if and only if $\ka$ is in $j(B)$.
In this context ``$\ka$ is in $j(B)$'' must be reinterpreted as 
``$\ka$ is forced to be in $j(\dot B)$'',
but these statements can be decided by boundedly much of the forcing
$P_{\Upsilon^+}$, 
as demonstrated by appeal to the technical device of the names $\tau_{\dot B}$.
In typical fashion, a long enough 
iteration with bookkeeping to ensure that
every name for a subset of $\ka$ is dealt with ``catches up with itself''.
At such closure stages we have that the resulting ultrafilter is defined
purely in terms of the construction that came before, and in particular
does not require a generic for the forcing $S^**\dot P'_{j(\Upsilon^+)}$
for its definition.
Moreover, these particular ultrafilters 
cohere with (indeed, extend) one another, 
allowing us to describe an ultrafilter in the
final extension in terms of those that came before, and 
to arrange that $\fru_\ka=\ka^+<2^{\ka}$ (see Theorem~\ref{usmall} below).

\begin{proof}[Proof of Main Claim]
Whilst the statement of the Main Claim might at first seem onerous, 
the sequences
$\bar\al$, $\bar p^*$ and $\bar q^*$ can actually be obtained by a relatively 
natural recursive construction,
making used of the $\la$-directed-closure of $S^*$ and $P'_{j(\Upsilon^+)}$
noted above.  
Indeed (\ref{als})--(\ref{pqqdecr}) merely set out the form of the sequences,
and whilst there is something to check, (\ref{cfikadef}) is 
actually giving part of the definition
for us.  Thus, the key to the recursive construction is
ensuring that (\ref{mastercond}) and (\ref{Bs}) hold.
We could begin with $\al_0=0$,
$p_0^*$ an arbitrary purely full element of $P_1$
(that is, $p_0^*=\bbone_\calD$ for some arbitrary $\calD$ in 
$\NUF^{V^{S_\ka}}$), and 
$q_0^*=(\bbone_{S^*},\dot\bbone_{P'_{j(\Upsilon^+)}})$.
But it will be notationally convenient if every $\al_i$ has cardinality 
$\Upsilon$, so let us take
$\al_0=\Upsilon$, $p^*_0$ an arbitrary purely full element of $P_{\Upsilon+1}$,
and 
$q_0^*=(\bbone_{S^*},\dot\bbone_{P'_{j(\Upsilon^+)}})$.

\noindent\emph{Choice of $\al_{i+1}$, $p^*_{i+1}$ and $q^*_{i+1}$,
given $\al_i$ and $p^*_i$ in $V^{S_\ka}$.}
First, towards the satisfaction of (\ref{mastercond}), note that
since $M$ is closed under taking $\la$-tuples,
we have
\[
\dot X_i=\{\langle j(\check r),r\rangle:
r\in P_{\al_i+1}\downarrow p^*_i\}^{V^{S_\ka}}
\in M^{S_\ka}.
\]
Of course, for generics containing $p_i^*$,
this $\dot X_i$ names $j``G_{P_{\al_i+1}}$, and 
\begin{align*}
(p^*_i,\dot\bbone_{S^*})\forces_{P_{\al_i+1}}&
\dot X_i\subseteq\dot P'_{j(\al_i)+1}\downarrow j(\check p^*_i)\land\\
&\dot X_i\text{ is directed}\land |\dot X_i|\leq\check\Upsilon.
\end{align*}
Thus, the $\la$-directed-closure of $P'_{j(\al_i)+1}$ 
allows us to find a master condition extending every condition in $X_i$, 
giving us the means to satisfy (\ref{mastercond}).
We postpone the use of this, as we will need to interleave it with our
construction towards the satisfaction of (\ref{Bs}).

In $V^{S_\ka}$, we have that $P_{\al_i+1}\downarrow p_i^*$ is a $\ka^+$-cc
partial order of size $\Upsilon$ (see Lemma~\ref{sizeUpsilon}, so there are 
$(\Upsilon^{\ka})^\ka=\Upsilon$ nice $P_{\al_{i}+1}\downarrow p_i^*$ names
for subsets of $\ka$.
Enumerate them in order type $\Upsilon$ as
$\langle\dot B^{i+1}_\zeta:\zeta<\Upsilon\rangle$.
To choose $p^*_{i+1}$ and $q^*_{i+1}$, we perform a further recursive 
construction, defining 
\begin{align*}
\langle\al^{i+1}_\zeta:\zeta<\Upsilon\rangle&\text{ increasing continuous,}\\
\langle p^{i+1}_{\zeta}:\zeta<\Upsilon\rangle&
\text{ decreasing continuous}\\
&\text{ with each $p^{i+1}_\zeta$ purely full in }
P_{\al^{i+1}_\zeta},\\
\langle q^{i+1}_\zeta=({}^1q^{i+1}_\zeta,{}^2q^{i+1}_\zeta):\zeta<\Upsilon
\rangle&\text{ (forced to be) decreasing, and }\\
\langle\tau_{\dot B^{i+1}_\zeta}:\zeta<\Upsilon\rangle&
\text{ a sequence of $S_\ka*\dot P_{\Upsilon^+}$-names}\\
&\text{ for elements of $\{0,1\}$.}
\end{align*}
Notice in particular that, whilst $\dom(p^*_i)=\al_i+1$, 
$\dom(p^{i+1}_\zeta)=\al^{i+1}_\zeta$.
Naturally enough, 
we start this recursion with $p^{i+1}_0=p^*_i$ and $q^{i+1}_0=q^*_i$.

Given $p^{i+1}_\zeta$ and $q^{i+1}_\zeta$, we want to extend to 
$p^{i+1}_{\zeta+1}$ and $q^{i+1}_{\ze+1}$ in a way that ``deals with''
$\dot B^{i+1}_\zeta$.  We ask whether there exists a $q$ that forces 
$\ka$ into $j(\dot B^{i+1}_\zeta)$ and which acts as a master condition for 
what has come before (perhaps confusingly, the \emph{negation} of this query is
referred to as ``the $\zeta$ question'' in \cite{DzS:UGSSC}).
Let us make this precise.  

We work in $M[G_{S_\ka*\dot P_{\al^{i+1}_\zeta}}]$, for some generic
$G_{S_\ka*\dot P_{\al^{i+1}_\zeta}}\ni(\bbone_S,\dot p^{i+1}_\zeta)$.
The values of $q$ and $\tau'_{\dot B^{i+1}_\zeta}$ 
that we describe there can then be combined below corresponding conditions in 
$P_{\Upsilon^+}$ to get single $P_{\Upsilon^+}$-names in the usual way.

We let 
\[
X^{i+1}_\zeta=\{j(r):r\in 
G_{S_\ka*\dot P_{\al^{i+1}_\zeta}}\};
\]
as for $X_i$, this
will be in $M[G_{S_\ka*\dot P_{\al^{i+1}_\zeta}}]$.

In $M[G_{S_\ka*\dot P_{\al^{i+1}_\zeta}}]$,
we ask whether there is
a condition $q=({}^1q,{}^2q)$ in $S^**\dot P'_{j(\Upsilon^+)}$ such that
\begin{itemize}
\item[$(\al)$] $q\leq q^{i+1}_\ze$
(and hence by induction $q\leq q^{i+1}_\xi$ for all $\xi\leq\zeta$), and
\item[$(\be)$] 
\begin{align*}
{}^1q\forces_{S^*}& \forall r\in\dot X^{i+1}_{\ze}({}^2q\leq r)\land\\
&{}^2q\in \dot P'_{j(\al^{i+1}_\zeta)}\downarrow j(p^{i+1}_\zeta)\land\\
&{}^2q\forces\check\ka\in j(\dot B^{i+1}_\zeta).
\end{align*}
\end{itemize}
Of course, the first conjunct in ($\be$) is towards making 
(\ref{mastercond}) hold, and the second conjunct is also to this end,
ensuring that ${}^2q$ does not interfere with parts of $j``G$
that arise later.
The final conjunct is, obviously, towards the satisfaction of (\ref{Bs}).

\emph{Case 1.} Suppose there is no $q$ that satisfies both $(\al)$ and $(\be)$.
Then we define $\tau'_{\dot B^{i+1}_{\zeta}}$ to be $0$ 
(in $M[G_{S_\ka*\dot P_{\al^{i+1}_\ze}}]$; in $M$ this of course contributes to
the definition of a 
$S_\ka*\dot P_{\al^{i+1}_\ze}$-name).

We claim that it is possible to find $q$ satisfying all of ($\al$) and
($\be$) except for the final conjunct of ($\be$),
and take $q^{i+1}_{\zeta+1}=({}^1q^{i+1}_{\zeta+1},{}^2q^{i+1}_{\zeta+1})$
to be such a condition.
That is, we take $q^{i+1}_{\zeta+1}\leq q^{i+1}_\ze$ such that
\begin{equation}
\tag{$\be'$}
{}^1q^{i+1}_{\zeta+1}\forces
\forall r\in\dot X^{i+1}_\ze(
{}^2q^{i+1}_{\zeta+1}\leq
r)\land
{}^2q^{i+1}_{\zeta+1}\in 
\dot P'_{j(\al^{i+1}_\zeta)}\downarrow j(p^{i+1}_\zeta).
\end{equation}
An appropriate condition
can be found since, as in the case of $X_i$ above, 
$(p^{i+1}_0,\dot\bbone_{S^*})$ forces 
$\dot X^{i+1}_\zeta$ to be small and directed,
and by induction,
the constraint on the support of ${}^2q$ in ($\be$) and ($\be'$)
ensures that all
previous ${}^2q^{i+1}_\xi$ are compatible with everything in $X^{i+1}_\zeta$.
(\Dzamonja and Shelah mention that $\dot X_i$ is in fact forced to be 
$\ka$-directed --- indeed,
it is a simple exercise to show that this is true for the
generic of any $<\ka$-strategically closed forcing.  But of course the 
$\ka$ in ``$\ka$-directed-closed'' refers to 
an upper bound on the size of the set, not the level of directedness, and so directedness suffices for our purposes.)

\emph{Case 2.}  If there is a $q$ satisfying ($\al$) and ($\be$), then we 
take $q^{i+1}_{\zeta+1}$ to be such a $q$, 
and take $\tau'_{\dot B^{i+1}_\zeta}=1$.

Stepping back to $M^{S_\ka}$ now, we may reconstruct 
$P_{\Upsilon^+}\downarrow p^{i+1}_\zeta$-names $q^{i+1}_{\zeta+1}$ and 
$\tau'_{\dot B^{i+1}_\zeta}$.
Since $\tau'_{\dot B^{i+1}_\zeta}$ is a $P_{\Upsilon^+}$-name for an element of
the ground model, by Lemma~\ref{kacc} there is a purely full in its domain
$p^{i+1}_{\zeta+1}\leq p^{i+1}_\zeta$ with domain some $\al^{i+1}_{\zeta+1}$
such that 
$p^{i+1}_{\zeta+1}$ forces $\tau'_{\dot B^{i+1}_\zeta}$ to be equivalent to a
$P_{\al^{i+1}_{\zeta+1}}\downarrow p^{i+1}_{\zeta+1}$-name; let us 
take $\tau_{\dot B^{i+1}_\zeta}$ to be such a name.
This concludes the description of the choice of $p^{i+1}_{\ze+1}$,
$\al^{i+1}_{\ze+1}$, $q^{i+1}_{\ze+1}$, and $\tau_{\dot B^{i+1}_\ze}$.

For limit ordinals $\zeta<\Upsilon$, we take 
$\al^{i+1}_\ze=\sup_{\xi<\ze}(\al^{i+1}_\xi)$ 
and $p^{i+1}_\ze=\bigcup_{\xi<\ze}p^{i+1}_\xi$.
Once again using the fact that
$S^**\dot P'_{j(\Upsilon^+)}$ is $\Upsilon^+$-directed-closed,
we may take $q^{i+1}_{\ze}$ to be a lower bound for
$\{q^{i+1}_\xi:\xi<\ze\}$.  
This concludes the recursion defining the sequences
$\langle \al^{i+1}_\ze\rangle$,
$\langle p^{i+1}_\ze\rangle$, 
$\langle q^{i+1}_\ze\rangle$, and
$\langle \tau_{\dot B^{i+1}_\ze}\rangle$. 
We now define $\al_{i+1}=\sup_{\ze<\Upsilon}(\al^{i+1}_\ze)$,
take $p^*_{i+1}$ to be any purely full condition in $P_{\al_{i+1}+1}$
extending $\bigcup_{\ze<\Upsilon}p^{i+1}_\zeta$ (so it is only
$p^*_{i+1}(\al_{i+1})$ that is arbitrary), and take
$q^*_{i+1}\in S^**\dot P'_{j(\al_{i+1})}$ such that
\[
(\bbone_{S_\ka},p^*_{i+1})\forces\forall\zeta<\Upsilon
(q^*_{i+1}\leq q^{i+1}_\zeta).
\]
By construction,
the requirements of the Main Claim (most notably items
(\ref{mastercond}) and (\ref{Bs})) are satisfied by these choices.

It remains to consider the choice of $\al_i$, $p^*_i$, and $q^*_i$
for $i<\Upsilon^+$ a limit ordinal.
Clearly we must take $\al_i=\sup_{j<i}\al_j$.
Likewise we must take $p^*_i$ purely full extending 
$\bigcup_{j<i}p*_j$, only leaving open the question of 
$p^*_i(\al_i)$:
if $\cf(i)\geq\ka$, we take $p^*_i(\al_i)$ as given by item (\ref{cfikadef})
of the Main Claim, and otherwise we take $p^*_i$ arbitrary.
We similarly take $q^*_i$ to be (forced to be) an arbitrary common extension
in $S^**\dot P'_{j(\al_i)}$
of $q^*_j$ for every $j<i$, as well as 
of every element of $X_i$; yet again this is possible by the level of
(directed) closure of $S^**\dot P'_{j(\al_i)}$.
The key items (\ref{mastercond}) and (\ref{Bs}) of the Main Claim only deal
with successor stages, so all that remains to check is that item
(\ref{cfikadef}) indeed yields a normal ultrafilter on $\ka$
when $\cf(i)>\ka$.
As \Dzamonja and Shelah note, this is a fairly
straightforward incorporation of 
master conditions into the
usual normal-ultrafilter-from-an-embedding argument. 
We shall nevertheless spell it out further.

First note that the definition of $p^*_i(\al_i)$ makes sense: if 
$\dot B$ is a $P_{\al_i}\downarrow(p^*_i\restr\al_i)$-name for a subset of
$\ka$, it is (equivalent to) a $P_{\al_j}$-name for some $j<i$.  This 
follows from the $\ka^+$-cc of $P_{\al_i}\downarrow(p^*_i\restr\al_i)$
(noted in the proof of Lemma~\ref{kacc}) and the fact that $\cf(i)>\ka$.

Suppose $G_{S_{\ka}*\dot P_{\al_i}}$ is 
an $S_\ka*\dot P_{\al_i}\downarrow(\bbone_{S_\ka},p^*_i\restr\al_i)$ generic,
and that in $V[G_{S*\dot P_{\al_i}}]$, $A\in p^*_i(\al_i)$ and $B\supseteq A$;
we wish to show that $B$ is also in $p^*_i(\al_i)$.
Choose names $\dot A$ and $\dot B$ for $A$ and $B$ respectively, and let $j<i$
be such that both $\dot A$ and $\dot B$ are 
$P_{\al_j+1}\downarrow p^*_j$-names.
Suppose $p\in G_{S*\dot P_{\al_i}}$ forces
$A\in p^*_i(\al_i)$ and $B\supseteq A$, that is, 
\begin{equation}\label{tA1BA}
\tag{$\dagger$}p\forces\tau_{\dot A}=1\land\dot B\supseteq\dot A. 
\end{equation}
By extending if necessary we may assume that $p\leq(\bbone_{S_\ka},p^*_{j+1})$.
Thus by item (g.2) of the Main Claim, in $M$ we have that 
\begin{equation}\label{tA1tB1}
\tag{$\ddagger$}(p,q^*_{j+1})\forces(\check\ka\in j(\dot A)\iff\tau_{\dot A}=1)\land
(\check\ka\in j(\dot B)\iff\tau_{\dot B}=1).
\end{equation}
It should be clear how we proceed from here, but note in particular the 
following point:
to deduce from \mbox{$(p,q^*_{j+1})\forces\check\ka\in j(\dot A)$} and 
\mbox{$p\forces\dot B\supseteq\dot A$} that
$(p,q^*_{j+1})\forces\check\ka\in j(\dot B)$, we need that $j$ lifts
to an elementary embedding
between the relevant forcing extensions. 
This is precisely why we needed to extend to
master conditions at every step of the iteration.
To be explicit, $p\forces\dot B\supseteq\dot A$ implies by elementarity only
that
$j(p)\forces j(\dot B)\supseteq j(\dot A)$.  However,
item (\ref{mastercond}) of the Main Claim ensures that 
$(p,q^*_{j+1})\leq j(p)$, so combining this with
(\ref{tA1BA}) and (\ref{tA1tB1}) we can indeed conclude that
\[
(p,q^*_{j+1})\forces\tau_{\dot B}=1.
\]
But now $\tau_{\dot B}$ is a $P_{\Upsilon^+}$-name, so it must be that
$p\forces\tau_{\dot B}=1$, and so $B\in p^*_i(\al_i)$.

The rest of the process of
checking that $p^*_i(\al_i)$ is a normal ultrafilter on $\ka$
is very similar, using the fact that we have taken master conditions
to get 
\begin{align*}
\text{from }p\forces&\dot B=\check\ka\smallsetminus\dot A&
\text{to }(p,q^*_{j+1})\forces&
\check\ka\notin j(\dot A)\iff \check\ka\in j(\dot B),\\
\text{from }p\forces&\dot B=\bigcap_{\ga<\de}\dot A_\ga&
\text{to }(p,q^*_{j+1})\forces&
\forall\ga<\de(\check\ka\in j(\dot A_\ga))\implies
\check\ka\in j(\dot B),\\
\text{\& from }p\forces&\dot B=\DiagInt_{\ga<\ka}\dot A_\ga&
\text{to }(p,q^*_{j+1})\forces&
\forall\ga<\ka(\check\ka\in j(\dot A_\ga))\implies
\check\ka\in j(\dot B).
\end{align*}
So we indeed have a normal ultrafilter, 
and hence a valid definition for $p^*_i(\al_i)$ for $i$ a
limit ordinal of cofinality greater than $\ka$.
This completes the proof of the Main Claim.
\end{proof}

With the Main Claim in hand we can finally prove the following.

\begin{thm}\label{usmall}
Let $\ka$ be a supercompact cardinal, and let $\Upsilon\geq2^\ka$ be a 
cardinal satisfying $\Upsilon^{\ka}=\Upsilon$.
Then there is a forcing extension in which $\ka$ remains supercompact,
$\fru_\ka=\ka^+$, and $2^\ka=\Upsilon$.
\end{thm}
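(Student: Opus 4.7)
My plan is to force over $V$ with $S_\ka*(\dot P_{\al_\delta}\downarrow\dot p^*_\delta)$ for a suitably chosen $\delta<\Upsilon^+$, using the sequences $\bar\al$ and $\bar p^*$ produced by the Main Claim. I would pick $\delta$ of cofinality $\ka^+$ such that $\{i<\delta:\cf(i)>\ka\}$ contains a $\ka^+$-sequence cofinal in $\delta$; for instance, take $\delta$ to be the limit of any increasing $\ka^+$-sequence of ordinals of cofinality $\ka^+$ below $\Upsilon^+$.

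Preservation of supercompactness and the cardinal arithmetic should be essentially routine. Since $S_\ka$ is the Laver preparation and $P_{\al_\delta}\downarrow p^*_\delta$ inherits $\ka$-directed-closure from $P_{\Upsilon^+}$ (Lemma~\ref{kadirdcl}), Laver indestructibility keeps $\ka$ supercompact. For $2^\ka=\Upsilon$: Lemma~\ref{sizeUpsilon} provides a dense suborder of $P_{\al_\delta}\downarrow p^*_\delta$ of size $\Upsilon$, and the $\Delta$-system argument from Lemma~\ref{kacc} gives the $\ka^+$-cc, so a nice-name count yields $2^\ka\leq\Upsilon^\ka=\Upsilon$; the reverse inequality comes from $(2^\ka)^V\geq\Upsilon$ together with preservation of cardinals $\geq\ka^+$.

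The substantive point is $\fru_\ka\leq\ka^+$. In the extension I would set
\[
\calU=\bigcup\{p^*_i(\al_i):i\leq\delta\text{ and }\cf(i)>\ka\}.
\]
The definition of $p^*_i(\al_i)$ via $\tau_{\dot B}$ in item~(\ref{cfikadef}) of the Main Claim is visibly coherent in $i$, since the criterion ``$\tau_{\dot B}=1$'' does not depend on which stage captures $B$; so $\calU$ is a filter, uniform because each $p^*_i(\al_i)$ is normal. That $\calU$ is in fact an ultrafilter follows from the $\ka^+$-cc together with $\cf(\delta)=\ka^+$: any $B\subseteq\ka$ in the extension has a nice name equivalent to a $P_{\al_j}$-name for some $j<\delta$, so any $i$ with $j<i\leq\delta$ and $\cf(i)>\ka$ decides $B$ via $p^*_i(\al_i)$. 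For the filter base: at each stage $\al_i$ with $\cf(i)>\ka$ the condition $p^*_\delta(\al_i)=\bbone_{p^*_i(\al_i)}$ restricts $Q_{\al_i}$ to $\bbM^\ka_{p^*_i(\al_i)}$, and this adds a Mathias generic $s_i\in p^*_i(\al_i)$ with $s_i\subseteq^*X$ for every $X\in p^*_i(\al_i)$. Given $A\in\calU$, fix such $i$ with $A\in p^*_i(\al_i)$; by regularity of $\ka$, $s_i\smallsetminus A$ is bounded below some $\ga<\ka$, so $s_i\smallsetminus\ga\subseteq A$. Hence
\[
\calB=\{s_i\smallsetminus\ga:i\leq\delta,\ \cf(i)>\ka,\ \ga<\ka\}
\]
is a filter base for $\calU$ of size $\ka^+\cdot\ka=\ka^+$, giving $\fru_\ka\leq\ka^+$; combined with the diagonalisation bound from the introduction, $\fru_\ka=\ka^+$.

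The main obstacle will be carrying out this construction of $\calU$ carefully in the final extension --- in particular, pinning down coherence of the $p^*_i(\al_i)$'s using item~(\ref{Bs}), and verifying that each Mathias generic $s_i$ really does end up in $\calU$ when viewed in $V[G*H]$ rather than only in the intermediate extension at stage $\al_i+1$.
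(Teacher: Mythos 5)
You are following the paper's own route: truncate the iteration below a purely full condition at a stage of cofinality $\ka^+$ (the paper takes $\delta=\ka^+\cdot\ka^+$, and note one forces below $p^*_\delta\restr\al_\delta$, since $p^*_\delta$ has domain $\al_\delta+1$), get supercompactness from the Laver preparation plus $\ka$-directed-closure, count nice names for $2^\ka\le\Upsilon$, and obtain a small-based uniform ultrafilter as the increasing union of the stage ultrafilters from item~(\ref{cfikadef}), generated by the Mathias generics added at those stages. Two of your steps fail as written, though both are repairable. First, the lower bound $2^\ka\ge\Upsilon$ cannot come from ``$(2^\ka)^V\ge\Upsilon$'': the hypothesis is $\Upsilon\ge 2^\ka$, and in the interesting case $\Upsilon$ is strictly larger than $(2^\ka)^V$. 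The correct source is the forcing itself: below the purely full condition every one of the $|\al_\delta|=\Upsilon$ coordinates is a nontrivial $\bbM^\ka_\calD$ and adds its own subset of $\ka$, which is what the paper means by ``since $|\al_{\ka^+\cdot\ka^+}|=\Upsilon$, $2^\ka=\Upsilon$''.

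Second, your base $\calB$ need not have size $\ka^+$. If $\delta$ is the limit of an \emph{arbitrary} increasing $\ka^+$-sequence of ordinals of cofinality $\ka^+$ below $\Upsilon^+$, then $\delta$ may have cardinality up to $\Upsilon$, and with it the index set $\{i\le\delta:\cf(i)>\ka\}$ of your generators; moreover $i=\delta$ contributes no Mathias generic at all, since the forcing stops at $\al_\delta$. The coherence you yourself invoke is the fix: the stage ultrafilters increase with $i$, so it suffices to take the $s_i$ along a single cofinal $\ka^+$-sequence of stages of cofinality greater than $\ka$ --- for instance choose $\delta=\ka^+\cdot\ka^+$ and use the stages $\al_{\ka^+\cdot(\de+1)}$, exactly as in the paper. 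Finally, the point you flag at the end, that each $s_i$ really lies in the final ultrafilter, is genuinely needed for $\calB$ to generate it (the paper passes over this silently); it does hold, because when a name for $s_i$ is treated in the Main Claim the Case 2 alternative is available: below the accumulated master conditions it is already forced that $\check\ka\in j(\dot A)$ for every $A$ in the stage-$i$ ultrafilter, so there is a master condition whose coordinate at $j(\al_i)$ puts $\ka$ into the stem, whence the corresponding $\tau$-name is $1$ and $s_i$ enters every later stage ultrafilter.
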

\begin{proof}
With $\langle\al_i:i<\Upsilon^+\rangle$ as in the Main Claim,
we take the forcing $S_\ka*\dot P_{\al_i}\downarrow(p^*_i\restr\al_i)$ for 
$i=\ka^+\cdot\ka^+$ (the ordinal square of $\ka^+$).
Let $G$ be $S_\ka*\dot P_{\al_i}\downarrow(p^*_i\restr\al_i)$-generic over
$V$.
Since we begin with the Laver preparation, $\ka$ 
certainly remains supercompact in $V[G]$, and since 
$|\al_{\ka^+\cdot\ka^+}|=\Upsilon$, $2^\ka=\Upsilon$ in $V[G]$.

To show that $\fru_\ka=\ka^+$ in the generic extension, we consider the
normal ultrafilter given by item (\ref{cfikadef}) of the Main Claim,
which would be $p^*_{\ka^+\cdot\ka^+}(\al_{\ka^+\cdot\ka^+})$ 
if we continued the iteration.
That is, in $V[G]$ we consider\\
\[
\calD=
\left\{B\subset\ka:\tau_{\dot B}[G]=1
\right\}
\]
(of course, whether $B$ is in $\calD$ is independent of the choice of $\dot B$
by the construction of the names $\tau_{\dot B}$).
Since conditions in $P_{\al_i}\downarrow(p^*_i\restr\al_i)$ have
essential support bounded below $\al_{\ka^+\cdot\ka^+}$, 
each subset $B$ of $\ka$ in the
extension is named by some stage of the iteration prior to
$\al_{\ka^+\cdot\ka^+}$.  In particular,
either $B$ or $\ka\smallsetminus B$
will appear in the ultrafilter 
$p^*_{\ka^+\cdot\de}(\al_{\ka^+\cdot\de})[G]$ for $\de<\ka^+$
sufficiently large,
and this ultrafilter
is determined by item (\ref{cfikadef}) of the Main Claim.
Hence, we have that
$B\in p^*_{\ka^+\cdot\de}(\al_{\ka^+\cdot\de})[G]$ if and only
if $\tau_{\dot B}[G]=1$, if and only if $B\in \calD$.
We thus have that 
\[
\calD=\bigcup_{\de<\ka^+}
p^*_{\ka^+\cdot\de}(\al_{\ka^+\cdot\de})[G].
\]
Now at stage $\al_{\ka^+\cdot\de}$ of the iteration, we are forcing with
$\bbM^\ka_{p^*_{\ka^+\cdot\de}}$, and so the generic subset of $\ka$
at this stage, $X_{\al_{\ka^+\cdot\de}}$, is almost below every element of
$p^*_{\ka^+\cdot\de}(\al_{\ka^+\cdot\de})[G]$.
Hence, $\calD$ is generated by the set
\[
\{Y\subseteq\ka:\exists\de<\ka^+(|Y\triangle X_{\al_{\ka^+\cdot\de}}|<\ka)\},
\]
which has cardinality $\ka^+$.
\end{proof}

\bibliographystyle{asl}
\bibliography{logic}

\end{document}